 \newcommand{\fn}{\mathfrak{n}}
 \newcommand{\fr}{\mathfrak{r}}
 \newcommand{\fa}{\mathfrak{a}}
 \newcommand{\fh}{\mathfrak{h}}
 \newcommand{\fd}{\mathfrak{d}}
 \newcommand{\fg}{\mathfrak{g}}
 \newcommand{\fl}{\mathfrak{l}}
 \newcommand{\fgl}{\mathfrak{gl}}
 \newcommand{\fsl}{\mathfrak{sl}}
 \newcommand{\fs}{\mathfrak{s}}
 \newcommand{\fv}{\mathfrak{v}}
 \newcommand{\fsu}{\mathfrak{su}}
 \newcommand{\ku}{\mathbb{K}}
 \newcommand\blank{{\mkern 2mu\cdot\mkern 2mu}}
 \DeclareMathOperator{\cdim}{codim}
 \DeclareMathOperator{\ad}{ad}
\DeclareMathOperator{\der}{Der}
 \DeclareMathOperator{\spa}{span} 
 \DeclareMathOperator{\inner}{Inner} 
 \DeclareMathOperator{\tr}{tr}
\title{Examples and patterns on quadratic Lie algebras}
\author{Pilar Benito\inst{1} \and Jorge Rold\'an-L\'opez\inst{2}}
\institute{Universidad de La Rioja, Departamento de Matem\'aticas y Computaci\'on, 
Complejo Cient\'ifico Tecnol\'ogico, Calle Madre de Dios 53, 26006 Logro\~no, La Rioja, Spain \email{pilar.benito@unirioja.es}
\and 
Universidad de La Rioja, Departamento de Matem\'aticas y Computaci\'on, Complejo Cient\'ifico Tecnol\'ogico, Calle Madre de Dios 53, 26006 Logro\~no, La Rioja, Spain
\email{jorge.roldanl@unirioja.es}}
\begin{document}
\maketitle

\begin{center}
\emph{In the memory of our collegue and friend Georgia Benkart.}
\end{center}

\begin{abstract}
    A Lie algebra is said to be quadratic if it admits a symmetric invariant and non-degenerated bilinear form. Semisimple algebras with the Killing form are examples of these algebras, while orthogonal subspaces provide abelian quadatric algebras. The class of quadratic algebras is outsize, but at first sight it is not clear weather an algebra is quadratic. Some necessary structural conditions appear due to the existence of an invariant form forces elemental patterns. Along the paper we overview classical features and constructions on this topic and focus on the existence and constructions of local quadratic.
\keywords{quadratic, bilinear form, lattice, double extension, $T^*$-extension}
\end{abstract}
    
\section{Introduction}

A bilinear form $\varphi$ on a Lie algebra $\fg$ with product $[x,y]$ that satisfy,
    \begin{equation}\label{eq:quadratic}
    \varphi([x,y],z)=\varphi(x,[y,z]) \quad\forall x,y,z\in \fg
    \end{equation}
is said invariant. In addition if $\varphi$ is symmetric and non-degenerated, the pair $(\fg, \varphi)$ is named quadratric Lie algebra (some authors called them quasi-classical, or self-dual, or even orthogonal). 
Finite-dimensional real quadrartic algebras appear as metric or metrizable. They were introduced in 1957 by S. T. Tsou and A. G. Walker \cite{Tsou_Walker_1957} who provided basic well known facts on existence and structure of this class of algebras. Tsou and Walker were looking for pretty properties to distinguish metrizable from non-metrizable Lie groups. The structure results they published are a compendium of the Tsou's PhD Thesis (Liverpool, 1955) whose adviser was Walker.

 In mid-1980s and along 1990s, two different methods of construction of quadratic Lie algebra appears: the double extension process and the $T^*$-extension (open up to general algebras in characteristic different from $2$). 
 The first method is a multi-step process that was introduced independently by several authors (Kac, Keith, Hoffmann, Favre, Santharoubane, Medina and Revoy). According to \cite{medina1985algebres}, any non-simple non-abelian indecomposable quadratic Lie algebra is a double extension of either by a one-dimensional or by a simple algebra. The $T^*$-extension is a one-step method that produces quadratic algebras as central extensions of any arbitrary $\fg$ Lie algebra by using a cyclic $2$-cocycle $\omega \in Z^2(\fg,\fg^*)$ (see \cite{bordemann1997nondegenerate} for definitions). The most basic example is the null extension $T^*_0(\fg)=\fg\oplus \fg^*$, so $\omega=0$, with the hyperbolic form $q(a+\beta, a'+\beta')=\beta'(a)+\beta(a')$ and product
    \begin{equation}\label{eq:T*-extension}
    [a+\beta, a'+\beta']=[a,a']_{\fg}+\beta\circ \ad\, a' -\beta'\circ\ad a.
    \end{equation}
For arbitrary $T^*_\omega$, the term $\omega(a,a')$ must be added to the expression \eqref{eq:T*-extension}.

The double extension procedure is a bit more complex. We start with a quadratic Lie algebra $(\fa, \varphi_\fa)$ and another Lie algebra $\fg$ such that there exists a Lie homomorphism $\rho\colon \fg \to \der_{\varphi_\fa} \fa$. From $\rho$ we get the $2$-cocycle $\omega\colon \fa\times\fa\to \fg^*$ given by $\omega(a,b)(x)=\varphi_\fa(\rho(x)(a),b)$ that let us define the central extension $\fa \oplus_\omega \fg^*$, under the multiplication
\begin{equation*}
    [a+\beta, a'+\beta']=[a,a']_\fa+\omega(a,b).
\end{equation*}
Now, from the coadjoint representation $\ad^*$ of $\fg$ ($\ad^*(x)(\beta)=-\beta\circ\ad x$) and $\rho$, we reach the homomorphism of Lie algebras $\psi\colon \fg\to \der (\fa\oplus_\omega \fg^*)$, $\psi(x)(a+\beta)=\rho(x)(a)-\beta\circ\ad x$. The semidirect product $\fa(\fg)=\fg \oplus_\psi(\fa\oplus_\omega \fg^*)$ is a Lie algebra with bracket
\begin{multline}\label{eq:de-bracket}
		[b + a + \beta, b' + a' + \beta'] :=  [b,b']_\fg + \rho(b)(a') - \rho(b')(a) + [a,a']_\fa \\+ w(a, a') + \beta'\circ\ad\,b - \beta\circ\ad\,b',
	\end{multline}
	and the bilinear form $\varphi$ is non-degenerated and symmetric on $\fa(\fg)$:
	\begin{equation}\label{eq:de-bilinear}
		\varphi(b + a + \beta, b' + a' + \beta') := \beta(b') + \beta'(b) + \varphi_{\fa}(a, a').
	\end{equation}
So, the pair $(\fa(\fg), \varphi)$ is a quadratic Lie algebra over $\mathbb{K}$ and it is called the double extension of $(\fa,\varphi_\fa)$ by $(\fg, \rho)$.

Along the 2000s, many efforts were made on small dimension classifications of quadratic Lie algebras or in deepening the structure of certain families related to symmetric spaces (Kath, Olbrich, Duong, Benayadi, Hilgert, Neeb). From then until today (see \cite{ovando2016lie} and references therein), we find papers on classification by using non-classical procedures (Benito, Laliena, de-la-Concepción) or relating quadratics algebras, geometric structures and applications (Rodriguez-Vallarte, Salgado, Cornulier, del Barco, Conti, Rossi).

Throughout this paper we will summarise features and results on quadratic algebras in three sections apart from the Introduction. Section 2 includes basic terminology, existence results and duality. Local quadratic algebras, their structure and constructions, are treated in Section 3. The final section is devoted to talk about the 2014 Elduque and Benayadi contribution  \cite{benayadi2014classification} on mixed quadratic as a tribute to Alberto on his 60th birthday.

We restrict to finite-dimensional algebras over a field $\ku$ of characteristic zero. These results, collected in different articles, have been revisited, expanded and exemplified along this paper.

\section{Patterns and duality}

Along this section $(\fg, \varphi)$ is a quadratic Lie $\ku$-algebra with product $[x,y]$. In general, for $U$ and $V$ subspaces of $\fg$, $U^\perp=\{x\in \fg: \varphi(x,u)=0\ \forall u\in U\}$ and  $[U,V]=\spa\langle[u,v]:u\in U, v\in V\rangle$. 

The \emph{lower (upper) central series} of $\fg$ is inductively defined as $\fg^1=\fg$ and $\fg^{t+1}=[\fg,\fg^t]$ ($Z_0(\fg)=\{0\}$, $Z_1=Z(\fg)$ the centre and $Z_{t+1}=\{x\in \fg: [x,\fg]\subset Z_t\}$). And the terms in \emph{derived series} are $\fg^{(1)}=\fg$ and $\fg^{(t+1)}=[\fg^{(t)},\fg^{(t)}]$. A Lie algebra $\fg$ is called \emph{nilpotent} (\emph{solvable}) if $\fg^t = 0$ for some $t \geq 1$ ($\fg^{(t)} = 0$ for some $t \geq 1$). The left multiplication by $x\in \fg$, $\ad x=[x,\blank]$, is a derivation known as \emph{inner derivation}. In general, a derivation $d$ of $\fg$ such that $\varphi(d(x),y)+\varphi(x,d(y)) = 0$ will be called \emph{$\varphi$-skew symmetric} and $\der_\varphi \fg$ will denote the set of skew-symmetric derivations with respect to $\varphi$, while $\der \fg$ and $\inner \fg$ will denote the whole set of derivations and inner derivations respectively. We note that condition (\ref{eq:quadratic}) is equivalent to saying that $\inner \fg$ is a subalgebra of $\der_\varphi \fg$ (in fact, it is an ideal). The algebra $\fg$ is \emph{reduced} if its centre is contained in the derived algebra $\fg^2$. And, $\fg$ is called \emph{decomposable} if it contains a proper and non-degenerate ideal $I$ (so $I\cap I^\perp=0$). Otherwise, we say that $\fg$ is \emph{indecomposable}.

Semisimple Lie algebras by means of their Killing form are quadratic. Abelian quadratic are just orthogonal vector spaces $(\fv, \varphi)$, $[u,v]=0$ for all $u,v\in \fv$. Orthogonal direct sums of quadratic algebras (as ideals) produce new quadratic algebras. Therefore, any \emph{reductive} Lie algebra $\fg=\fs\oplus Z(\fg)$, $\fs$ semisimple, is quadratic. But the class of quadratic algebras is very large and assorted. There are a wide variety that includes non-abelian nilpotent, solvable (non-nilpotent) and mixed (non-solvable and non-semisimple). In fact, denoting by $(r,s)=(\dim \,\fg^2,  \dim \,Z(\fg))$, in \cite[Theorem 5.1]{tsou1962xi} we find the following result:

\begin{theorem}[Tsou, 1962]
    There exist reduced quadratic Lie algebras of arbitrary type $(r,s)$ except for $(5, 0)$,  $(7,0)$ and $(4,s)$, $0\leq s\leq 4$.
\end{theorem}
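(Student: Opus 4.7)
The plan starts from the duality (proved later in Section~2) that in any quadratic $(\fg,\varphi)$ one has $Z(\fg)=(\fg^2)^\perp$. Combined with the non-degeneracy of $\varphi$ this forces $r+s=\dim\fg$, so the task reduces to exhibiting, for each admissible pair $(r,s)$, a reduced quadratic Lie algebra of dimension $r+s$ whose derived ideal has dimension $r$, and, conversely, to ruling out the three exceptional families.

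For the constructive half the strategy is to assemble a small toolkit of reduced quadratic ``atoms'' and glue them by orthogonal direct sums; the latter operation visibly preserves both quadraticity and reducedness, since $Z(\fg_1\oplus\fg_2)=Z(\fg_1)\oplus Z(\fg_2)$ is contained in $\fg_1^2\oplus\fg_2^2$. The atoms I would rely on are: (i) simple Lie algebras with their Killing forms, giving type $(\dim\fs,0)$, whose direct sums already cover $(r,0)$ for $r=3$ and for every $r\geq 6$ with $r\neq 7$; (ii) $T^*_0$-extensions of reduced Lie algebras $\fl$, which by a short computation from \eqref{eq:T*-extension} yield a reduced quadratic algebra of type $(\dim\fl^2+\dim\fl-\dim Z(\fl),\ \dim Z(\fl)+\dim\fl-\dim\fl^2)$; and (iii) one-dimensional double extensions of abelian quadratic spaces by a non-zero skew derivation, furnishing a tunable family of nilpotent and oscillator-type algebras via \eqref{eq:de-bracket}-\eqref{eq:de-bilinear}. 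Enumerating small dimensions one verifies that every pair outside the exceptional list is realised by an orthogonal sum of such pieces; this is a finite and essentially routine bookkeeping exercise.

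The non-existence half is where the invariance condition has to be exploited. If $s=0$ then $\fg$ is perfect with trivial centre, so by the Levi decomposition $\fg=\fs\ltimes\fr$ with $\fr$ nilpotent and contained in $\fg^2$. Since no semisimple Lie algebra has dimension $4,5$ or $7$, the cases $(4,0),(5,0),(7,0)$ reduce either to the non-existence of a perfect Lie algebra in that dimension (trivially for $(4,0)$) or to the non-existence of a non-degenerate invariant symmetric form on the admissible Levi-radical configurations. For $(5,0)$ the only candidate is $\fs=\fsl_2$ acting on its natural $2$-dimensional module, and a decomposition of the $\fs$-invariants in $S^2(\fg^*)$ shows that the unique $\fs$-invariant symmetric bilinear form is a scalar multiple of the Killing form, which degenerates on $\fr$. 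A finite enumeration of Levi-radical pairs handles $(7,0)$ in exactly the same way. For $(4,s)$ with $0\leq s\leq 4$, a parallel argument applied to the ideal $\fg^2$, four-dimensional and carrying the non-degenerate restriction of $\varphi$ on the orthogonal complement of the centre, traps $\fg^2$ in precisely the excluded perfect low-dimensional situation.

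The main obstacle is this non-existence half: one has to control simultaneously the $\fs$-module structure of the radical and the space of invariant symmetric bilinear forms on each candidate, and the $(4,s)$ range is the most delicate because one must first isolate a canonical perfect sub-object of $\fg$ and then invoke the already-established low-dimensional non-existence. The constructive half, while lengthy, is essentially routine once the three atoms and the direct-sum gluing are in place.
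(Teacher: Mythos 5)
First, for the record: the paper does not prove this theorem at all --- it is quoted from Tsou's 1962 article, and the authors explicitly remark that Tsou's proof rests on ``multilinear arguments and tools'' and is not constructive. So your outline must stand on its own, and it has a genuine gap in each half. In the constructive half, your three atoms together with orthogonal direct sums cannot realise all non-excluded types, so the ``finite and essentially routine bookkeeping'' cannot close. Orthogonal summation adds types componentwise and every reduced summand satisfies $s\le r$; your semisimple atoms have type $(\dim\fs,0)$, the trivial $T^*$-extensions have even total dimension $2\dim\fl$, and the oscillator-type double extensions of abelian spaces have type $(2m+1,1)$. Hence any reachable type with $r+s$ odd must get its odd-dimensional part from semisimple summands, which contribute nothing to $s$. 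Already the type $(3,2)$ --- realised by the $5$-dimensional free nilpotent algebra $\fn_{2,3}$ of Example \ref{ex:ndt-free} and not on the exception list --- is unreachable: $r+s=5$ is odd, the only available odd semisimple contribution is $(3,0)$, and the remainder $(0,2)$ violates $s\le r$. The same parity obstruction excludes $(5,4)$, $(7,4)$, and infinitely many further types; you need additional building blocks (nontrivial $T^*_\omega$-extensions, double extensions of nonabelian nilpotent quadratic algebras) and an actual argument that they exhaust the list.

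In the non-existence half, the reduction of $(4,s)$, $1\le s\le 4$, to ``the excluded perfect low-dimensional situation'' fails, most visibly for $(4,4)$: there $\fg^2=Z(\fg)$, so $\fg$ is $2$-step nilpotent of dimension $8$ and contains no perfect sub-object to which a Levi--radical analysis could apply. The true obstruction is multilinear, exactly as the paper hints. Since $Z(\fg)=(\fg^2)^\perp=Z(\fg)^\perp$, the centre is Lagrangian; pick a complement $W$, so $\varphi$ pairs $Z(\fg)$ perfectly with $W$, and invariance makes $T(x,y,z)=\varphi([x,y],z)$ an alternating trilinear form on the $4$-dimensional space $W$. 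Every $3$-form on a $4$-dimensional space equals $\iota_v\mu$ for some volume form $\mu\in\Lambda^4W^*$ and some $v$, hence has a nonzero radical whenever $W\neq 0$; therefore $[\fg,\fg]=[W,W]\subsetneq Z(\fg)$, contradicting $r=s=4$. A tensorial analysis of this kind, not a reduction to perfect algebras, is also what is needed for the remaining $(4,s)$ cases and for the nonabelian $4$-dimensional nilradical configurations in $(7,0)$, which your sketch omits. Your arguments for $(4,0)$ and $(5,0)$ are essentially correct.
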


\noindent The proof of this theorem, based on multilinear arguments and tools, does not make it clear how to build them.

Since $\varphi$ is non-degenerated and invariant, the centre and the derived algebra of $\fg$ are orthogonal:
\begin{equation}\label{eq:Z-der-orto}
    (\fg^2)^\perp=\{x\in\fg:\varphi([xy],z)=0=-\varphi (y,[xz])\ \forall x,y \in \fg \}=Z(\fg), 
\end{equation}and therefore
\begin{equation}\label{eq:Z-der-dimens}
    \dim \fg=\dim (\fg^2)^\perp+\dim Z(\fg). 
\end{equation}

\noindent Equalities \eqref{eq:Z-der-orto} and \eqref{eq:Z-der-dimens} are two of the most famous patterns on quadratic algebras. The following proposition covers other well-known features.

\begin{proposition}\label{prop:pattern-1}
Let $(\fg, \varphi)$ a quadratic Lie algebra, $U$ any subspace and $I$ an ideal. Then:

\begin{enumerate}[\quad a)]
    
    \item $\dim \fg=\dim U+\dim U^\perp$ and, if $U$ is non-degenerate, $\fg=U\oplus U^\perp$.
    
    \item $U$ is an ideal of $\fg$ if and only if $[U^\perp, U]=0$.
    
    \item Any minimal and non-degenerate ideal of $\fg$ is simple or one-dimensional. Minimal degenerated ideals are isotropic and abelian.
   
    \item The algebra $\fg$ decomposes as the orthogonal direct sum, as ideals, of a reduced quadratic and an abelian.
   
    \item If $\fg$ is indecomposable $I\cap I^\perp\neq 0$ and $Z(\mathfrak{g})\subseteq \mathfrak{g}^2$. In particular, $\fg$ is a reduced algebra and has not simple ideals.
    
    \item The centre of a nonzero solvable quadratic algebra is nonzero.
    
    \item If $\fg$ is indecomposable, $\fg$ is one-dimensional or simple or isometrically isomorphic to a double extension of an algebra $\fh$ one-dimensional or simple.

\end{enumerate}
\end{proposition}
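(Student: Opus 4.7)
The plan is to handle the seven items sequentially, as each rests on the previous. Parts (a) and (b) are formal consequences of non-degeneracy and invariance: for (a), the map $\fg \to U^*$, $x \mapsto \varphi(x,\blank)|_U$, is surjective with kernel $U^\perp$, which yields the dimension count, and non-degeneracy of $\varphi|_U$ gives $U\cap U^\perp = 0$. For (b), non-degeneracy on $\fg$ reduces $[v,u]=0$ to $\varphi([v,u],z)=\varphi(v,[u,z])=0$ for all $z\in\fg$; if $U$ is an ideal this holds since $[u,z]\in U$, and the converse follows from $U=(U^\perp)^\perp$ by the same identity.

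For (c), let $I$ be a minimal ideal. Both $[I,I]$ and $I\cap I^\perp$ are $\fg$-ideals inside $I$, so minimality drastically restricts the possibilities. If $I$ is non-degenerate and abelian, invariance gives $\varphi([g,u],v)=\varphi(g,[u,v])=0$ for $u,v\in I$, so $[\fg,I]\subseteq I\cap I^\perp=0$, making $I$ central; $\fg$-irreducibility of $I$ then forces $\dim I=1$. In the non-degenerate non-abelian case $[I,I]=I$, and the radical of $I$, being $\ad\fg$-stable, is a $\fg$-ideal of $\fg$ inside $I$, hence zero, so $I$ is semisimple; writing $I=\bigoplus_j I_j$ with $I_j$ simple, minimality makes $\fg$ permute the $I_j$ transitively, and one verifies that this is incompatible with $\varphi$ being invariant and non-degenerate on the whole of $\fg$ unless there is a single summand. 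In the degenerate case minimality forces $I\subseteq I^\perp$, and then $\varphi([I,I],\fg)=\varphi(I,[I,\fg])\subseteq\varphi(I,I)=0$ gives $[I,I]=0$.

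For (d), put $W:=Z(\fg)\cap\fg^2$; using $Z(\fg)=(\fg^2)^\perp$ a short computation shows $W$ is exactly the radical of $\varphi|_{Z(\fg)}$, so one can pick a complement $\fa$ of $W$ in $Z(\fg)$ on which $\varphi$ is non-degenerate, and (a) yields $\fg=\fa\oplus\fa^\perp$ with $\fa^\perp$ reduced (since $Z(\fa^\perp)=W\subseteq\fg^2=(\fa^\perp)^2$). Part (e) is immediate modulo the purely abelian edge cases: indecomposability collapses the decomposition of (d) to $\fg=\fa^\perp$, so $\fg$ is reduced, and $I\cap I^\perp\neq 0$ for every proper ideal is the definition of indecomposability rephrased through (a). For (f), if $\fg$ is solvable and nonzero then $\fg^2\subsetneq\fg$, so $Z(\fg)=(\fg^2)^\perp\neq 0$.

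Part (g) is the Medina--Revoy structure theorem and the principal obstacle. Assume $\fg$ is neither one-dimensional nor simple and choose a minimal ideal $I$; by (c) and indecomposability $I$ is abelian and isotropic. Organise $\fg$ around the flag $I\subseteq I^\perp\subseteq\fg$: invariance gives $[I^\perp,I]=0$, so $\fh:=\fg/I^\perp$ acts faithfully and irreducibly on $I$ and is paired with $I$ by $\varphi$, while $\fa:=I^\perp/I$ inherits a non-degenerate invariant form since the radical of $\varphi|_{I^\perp}$ is $I^\perp\cap I=I$. One then reads off the representation $\rho\colon\fh\to\der_{\varphi_\fa}\fa$ and the cocycle $\omega$, and checks that the bracket and form on $\fg$ match (\ref{eq:de-bracket})--(\ref{eq:de-bilinear}), exhibiting $\fg$ as a double extension of $(\fa,\varphi_\fa)$ by $(\fh,\rho)$. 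The genuine difficulty is arranging $\fh$ to be one-dimensional or simple: if $\fh$ is solvable, (f) applied to $\fh$ provides a one-dimensional central ideal that one peels off; if $\fh$ is not solvable, a Levi-type argument isolates a simple summand acting on $I$. Iterating this dichotomy terminates at the claimed description.
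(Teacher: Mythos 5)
Your treatment of items a) through f) is correct and essentially self-contained, which is more than the paper itself does (it delegates b), c) and g) to Figueroa-O'Farrill and Stanciu and d) to Tsou--Walker, arguing only e) and f) in place). Two small repairs: in c), elements of $\fg$ act on a semisimple ideal $I$ by derivations of $I$, and every derivation of a semisimple algebra preserves each simple component; so each $I_j$ is itself an ideal of $\fg$ and minimality leaves a single summand directly --- there is no ``transitive permutation'' to rule out, and no appeal to $\varphi$ is needed at that point. In e) you should also record why no simple ideal can survive: a simple ideal $S$ with $S\cap S^\perp\neq 0$ would satisfy $S\cap S^\perp=S$, hence be isotropic and therefore abelian by your own computation in c).

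The genuine gap is in g), at exactly the step you flag as the difficulty. The proposed dichotomy (if $\fh$ is solvable peel off a central line via f), otherwise invoke a Levi-type argument, and iterate) is not a proof: item f) applies to solvable \emph{quadratic} algebras, whereas $\fh=\fg/I^\perp$ carries no induced invariant form; it is also unclear what the iteration produces or why it terminates in a \emph{single} double extension by a one-dimensional or simple algebra. Moreover your assertion that $\fh$ acts faithfully on $I$ is itself the point that needs proof. The standard argument is shorter and direct. Since $I$ is minimal and abelian, its $\ad\fg$-submodules are ideals of $\fg$, so $I$ is an irreducible $\fg$-module. If $[\fg,I]=0$ then $\dim I=1$ and $\fh$ is one-dimensional. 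Otherwise $[\fg,I]=I$, and for any $x$ with $[x,I]=0$ invariance gives $\varphi(x,[y,u])=-\varphi(y,[x,u])=0$, so $\varphi(x,I)=\varphi(x,[\fg,I])=0$ and the annihilator of $I$ equals $I^\perp$; thus $\fh$ acts faithfully and irreducibly on $I$. Finally, $u\mapsto\varphi(u,\blank)$ identifies $I$ with $(\fg/I^\perp)^*=\fh^*$ as $\fh$-modules under the coadjoint action, so the coadjoint --- equivalently the adjoint --- representation of $\fh$ is irreducible, i.e.\ $\fh$ has no proper nonzero ideals and is therefore one-dimensional or simple. No iteration is required. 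You must still lift $\fh$ to a subalgebra of $\fg$ complementary to $I^\perp$ (trivial in the one-dimensional case, Levi--Malcev in the simple case) before verifying that the bracket and form match \eqref{eq:de-bracket}--\eqref{eq:de-bilinear}.
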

\begin{proof}
    A detailed proof of items b) to c) and g) can be found in \cite{figueroa1996structure}. Item d) is just \cite[Proposition 6.2]{Tsou_Walker_1957} and e) follows easily from previous items. For item f), apply \eqref{eq:Z-der-orto} and $\fg^2\neq \fg$ because of $\fg$ is nonzero and solvable.
\end{proof}

We introduce examples on quadratic and non quadratic algebras regarding previous Proposition \ref{prop:pattern-1} and classical constructions.

\begin{example}\label{ex:oscillator}
The smallest solvable non-abelian quadratic algebra is the oscillator $(\fd_4, \psi)$ with basis $x_1,x_2,x_3,z$, nonzero products $[x_1,x_2]=x_3$, $[x_1,x_3]=-x_2$, $[x_2,x_3]=z$ and form $\psi(x_1,z)=1$ and $\psi(x_2,x_2)=\psi(x_3,x_3)=1$. The derived algebra $\fd_4^2=[\fd_4,\fd_4]=\fh_1$. An easy computation yields $\der_\psi \fd_4=\inner \fd_4$. This algebra arises over the reals in the quantum mechanical description of a harmonic oscillator and $\psi$ is a Lorentzian form.  \hfill $\square$
\end{example}

\begin{example}\label{ex:tensor_product}
Following \cite[Proposition A]{hofmann1986invariant}, the tensor product $(\fg\otimes \fa, \varphi\otimes q)$ where $(\fg, \varphi)$ is quadratic Lie with bracket $[x,y]$ and $(\fa, q)$ an associative and commutative algebra with invariant symmetric and nondegenerate form $q$ and product $ab$ produces the quadratic algebra $[x\otimes a,y\otimes b]=[x,y]\otimes ab$ under the form $(\varphi\otimes q)(x\otimes a,y\otimes b)=\varphi(x,y)q(a,b)$. As a remarkable example we point out $(\fs\otimes \frac{\ku[t]}{\langle t^n\rangle}, \mathcal{K}\otimes q)$ for any $n\geq 1$, $\fs$ semisimple, $K(x,y)=\tr (\ad x\circ \ad y)$ the Killing form and $q(x^i,x^j)=1$ ($q(x^i,x^j)=0$) if and only if $i+j+1=n$ (otherwise), here $x$ is the class of $t$ module $\langle t^n\rangle$.  \hfill $\square$
\end{example}

\begin{example}
The generalized Heisenberg algebra series is determined through the property $\fg^2=Z(\fg)=\spa\langle z\rangle$. They are $2$-nilpotent ($\fg^3=0$) of odd dimension. For any $n\geq 1$ there exists a unique $n^{th}$ Heisenberg algebra of dimension $2n+1$ that we denote from now on as $\fh_n$. The algebra $\fh_n$ has a \emph{standard basis} $e_1,\dots, e_n,e_{n+1}, \dots e_{2n}, z$ with nonzero brackets $[e_i,e_{n+1}]=z$. Note that item a) in Proposition \ref{prop:pattern-1} fails, so $\fh_n$ are not quadratic. Doubling by the dual space $(\fh_n)^*$, we get the trivial quadratic $T^*$-extension $T_0(\fh_n)$ as it is defined in \eqref{eq:T*-extension}. In \cite[Example 4.1]{bordemann1997nondegenerate}, examples of nilpotent quadratic with arbitrary nilindex follows from the non-quadratic nilpotent filiform algebra series $\fl_n$, for $n\geq 2$. \hfill $\square$ 
\end{example}

\begin{example}\label{ex:ndt-free}
According to \cite{del2012free}, the $5$-dimensional algebra $\fn_{2,3}$ and the $6$-dimensional $\fn_{3,2}$ are the unique quadratic free nilpotent Lie algebras. Any algebra in the free nilpotent series $\fn_{d,t}$ ($d\geq 2$ and $t\geq 2$) satisfies: the nilindex is $t+1$, $d=\cdim \fn_{d,t}^2$ and the centre, $Z(\fn_{d,t})=(\fn_{d,t})^t$, has dimension $\frac{1}{t}\sum_{a|t}\mu(a)d^{t/a}$, where $\mu$ the is M\"obius function (see \cite{gauger_1973}). Then, the assertion is a corollary of item a) in Proposition \ref{prop:pattern-1}. The quadratic structure of $\fn_{2,3}$ is given by the (Hall) basis $\{a_i\}_{i=1}^5$ and nonzero products $[a_2 , a_1] = a_4$, $[a_3 , a_1] = a_5$, $[a_3 , a_2] = a_6$ and the symmetric form $\varphi(a_i, a_j) = (-1)^{i-1}$ for $i \leq j$ and $i + j = 6$ and $\varphi(a_i, a_j) = 0$ otherwise. For $\fn_{3,2}$, take as (Hall) basis $\{a_i\}_{i=1}^6$ and nonzero products $[a_2 , a_1] = a_4$, $[a_3 , a_1] = a_5$, $[a_3 , a_2] = a_6$ and symmetric form $\phi(a_i, a_j) = (-1)^{i-1}$ for $i \leq j$ and $i + j = 7$ and $\phi(a_i, a_j) =0$ otherwise.  \hfill $\square$
\end{example}

The existence of an invariant non-degenerated bilinear form on $\fg$ is equivalent to saying that the adjoint $\rho=\ad$ and coadjoint $\rho^*$ representations of $\fg$ are isomorphic. This pattern has a significant influence on the lattice of ideals of quadratic algebras (see \cite{hofmann1986invariant} and \cite{keith1984invariant}).

\begin{proposition}[Keith, Hofmann, 1984]
    For a quadratic Lie algebra $(\fg, \varphi)$, the map $\Omega\colon \text{Set of ideals of} \ \fg\to \text{Set of ideals of} \ \fg$, $\Omega(I)=I^\perp$ is an involutive anti-automorphism of the lattice of ideals of $\fg$. In particular lattices of ideals of quadratic Lie algebras are self-dual and, 

    \begin{enumerate}[\quad a)]
        \item $I$ is a minimal ideal if and only if $I^\perp$ is maximal. So, the sum of the set of minimal ideals, socle ideal, and the intersection of the set of maximal ideals, Jacobson radical, are orthogonal.
        \item The terms $\fg^{t+1}$ and $Z_t(\fg)$ of the lower and upper central series are orthogonal and $\dim\ \fg =\dim\ \fg^{t+1} + \mathrm{dim}\ Z_t(\fg)$.
    \end{enumerate}
\end{proposition}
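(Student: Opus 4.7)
The plan is to establish the anti-automorphism property first, then read off (a) and (b) as formal consequences plus a short induction.

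First, I would check that $\Omega$ is well-defined: for an ideal $I$, the orthogonal $I^\perp$ is again an ideal. This is the direct translation of invariance (\ref{eq:quadratic}): if $x\in I^\perp$ and $y\in \fg$, then for every $u\in I$ we have $\varphi([y,x],u)=-\varphi(x,[y,u])=0$, since $[y,u]\in I$. Next, $\Omega$ is involutive and inclusion-reversing: involutivity $(I^\perp)^\perp=I$ is standard from non-degeneracy in finite dimension (via item a of Proposition~\ref{prop:pattern-1} applied to $U=I$ and dimension count), and $I_1\subseteq I_2$ obviously gives $I_2^\perp\subseteq I_1^\perp$. Combined with involutivity, this forces the two lattice identities
\begin{equation*}
(I_1+I_2)^\perp=I_1^\perp\cap I_2^\perp,\qquad (I_1\cap I_2)^\perp=I_1^\perp+I_2^\perp,
\end{equation*}
where the first is immediate from the definition of $\perp$ and the second follows from the first by applying $\Omega$ to both sides. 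This already gives that $\Omega$ is an involutive anti-automorphism of the lattice of ideals, and hence that lattices of ideals of quadratic Lie algebras are self-dual.

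For (a), I would use that any anti-automorphism of a lattice swaps minimal and maximal elements: $I$ is minimal iff $I^\perp$ is maximal. The socle $\mathrm{Soc}(\fg)=\sum_j I_j$, sum over minimal ideals, then satisfies $\mathrm{Soc}(\fg)^\perp=\bigcap_j I_j^\perp=\mathrm{Jac}(\fg)$, so socle and Jacobson radical are orthogonal.

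For (b), I would proceed by induction on $t$, showing $(\fg^{t+1})^\perp=Z_t(\fg)$; the dimension formula then follows from item a) of Proposition~\ref{prop:pattern-1}. The base $t=0$ reads $\fg^\perp=\{0\}=Z_0(\fg)$, which is non-degeneracy. For the inductive step, using $\fg^{t+1}=[\fg,\fg^t]$, invariance gives
\begin{equation*}
x\in(\fg^{t+1})^\perp \iff \varphi([x,\fg],\fg^t)=0 \iff [x,\fg]\subseteq(\fg^t)^\perp,
\end{equation*}
and by induction $(\fg^t)^\perp=Z_{t-1}(\fg)$, so the right-hand condition is exactly $x\in Z_t(\fg)$. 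The only mildly delicate point is the inductive reduction itself: one must be careful that the equivalence uses invariance in the form $\varphi([x,y],z)=-\varphi(y,[x,z])$ and not a misapplied sign, but beyond that the argument is purely formal. The main conceptual obstacle is really only in the first paragraph, namely recognising that all the lattice identities reduce to involutivity plus the trivial $(I_1+I_2)^\perp=I_1^\perp\cap I_2^\perp$; once that is in hand, (a) and (b) are corollaries.
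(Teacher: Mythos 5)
Your proof is correct and complete. Note that the paper itself offers no proof of this proposition: it is stated as a result of Keith and Hofmann with references to \cite{hofmann1986invariant} and \cite{keith1984invariant}, so there is nothing internal to compare against. Your argument is the standard one and all the steps check out: invariance gives that $I^\perp$ is an ideal, non-degeneracy plus the dimension count $\dim\fg=\dim I+\dim I^\perp$ gives involutivity, and the two de Morgan-type identities for $+$ and $\cap$ follow as you say. The reduction of a) to the fact that an involutive anti-automorphism fixing the bounds $0$ and $\fg$ swaps minimal (nonzero) and maximal (proper) ideals is clean, and it correctly yields that \emph{every} maximal ideal arises as $I^\perp$ for some minimal $I$, so the intersection you compute really is the full Jacobson radical (using the paper's own characterisation of $\mathfrak{J}(\fg)$ as the intersection of all maximal ideals). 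The induction in b) is consistent with the paper's indexing ($\fg^1=\fg$, $Z_0(\fg)=0$), with the base case being non-degeneracy and the case $t=1$ recovering equation \eqref{eq:Z-der-orto}.
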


The lattice of ideals of the oscillator $\fd_4$ is a $4$-chain in case $x^2+1$ is irreducible over $\ku$. Otherwise, there are eigenvectors of $\ad x_1$, $[x_1,u]=\lambda u$ and $[x_1,v]=-\lambda v$ and $Z(\fd_4)=\langle[u,v]\rangle$ and the proper ideals of $\fd_4$ are $\fd_4^2$, $\langle u,[u,v]\rangle$, $\langle v,[u,v]\rangle$ and the centre. Both lattices are self-dual. The next figure shows other lattices of quadratic Lie algebras.

\newcommand{\hd}{5}   
	\newcommand{\dnp}{-6} 
	\newcommand{\dsp}{-2} 
	\newcommand{\drp}{2}  
	\newcommand{\dbp}{6}  
	\newcommand{\anchCubo}{\hd/3}
	\newcommand{\altoCubo}{\hd/3}
	\newcommand{\sepYCubo}{\hd/3}
	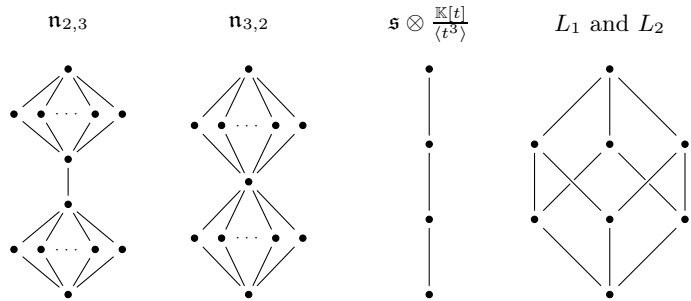
\begin{figure}[h]
	
		\centering
		\begin{tikzpicture}[scale=.6]
			\node (DN6)  at (\dnp,    \hd) {};
			\node (DN55) at (\dnp+1.2,\hd*4/5) {};
			\node (DN54) at (\dnp+0.6,\hd*4/5) {};
			\node (DN53) at (\dnp,    \hd*4/5) {\tiny$\cdots$};
			\node (DN52) at (\dnp-0.6,\hd*4/5) {};
			\node (DN51) at (\dnp-1.2,\hd*4/5) {};
			\node (DN4)  at (\dnp,    \hd*3/5) {};
			\node (DN3)  at (\dnp,    \hd*2/5) {};
			\node (DN25) at (\dnp+1.2,\hd*1/5) {};
			\node (DN24) at (\dnp+0.6,\hd*1/5) {};
			\node (DN23) at (\dnp,    \hd*1/5) {\tiny$\cdots$};
			\node (DN22) at (\dnp-0.6,\hd*1/5) {};
			\node (DN21) at (\dnp-1.2,\hd*1/5) {};
			\node (DN1)  at (\dnp,    0) {};
			\node (DNN)  at (\dnp,    \hd+1) {$\mathfrak{n}_{2,3}$};
			
			\filldraw [black] (DN6) circle (2pt);
			\filldraw [black] (DN55) circle (2pt);
			\filldraw [black] (DN54) circle (2pt);
			\filldraw [black] (DN52) circle (2pt);
			\filldraw [black] (DN51) circle (2pt);
			\filldraw [black] (DN4) circle (2pt);
			\filldraw [black] (DN3) circle (2pt);
			\filldraw [black] (DN25) circle (2pt);
			\filldraw [black] (DN24) circle (2pt);
			\filldraw [black] (DN22) circle (2pt);
			\filldraw [black] (DN21) circle (2pt);
			\filldraw [black] (DN1) circle (2pt);
			
			\draw (DN1) -- (DN21) -- (DN3) -- (DN4) -- (DN51) -- (DN6);
			\draw (DN1) -- (DN22) -- (DN3);
			\draw (DN1) -- (DN24) -- (DN3);
			\draw (DN1) -- (DN25) -- (DN3);
			\draw (DN4) -- (DN52) -- (DN6);
			\draw (DN4) -- (DN54) -- (DN6);
			\draw (DN4) -- (DN55) -- (DN6);
		
			\node (DS6)  at (\dsp,    \hd) {};
			\node (DS55) at (\dsp+1.2,\hd*3/4) {};
			\node (DS54) at (\dsp+0.6,\hd*3/4) {};
			\node (DS53) at (\dsp,    \hd*3/4) {\tiny$\cdots$};
			\node (DS52) at (\dsp-0.6,\hd*3/4) {};
			\node (DS51) at (\dsp-1.2,\hd*3/4) {};
			\node (DS4)  at (\dsp,    \hd*2/4) {};
			\node (DS25) at (\dsp+1.2,\hd*1/4) {};
			\node (DS24) at (\dsp+0.6,\hd*1/4) {};
			\node (DS23) at (\dsp,    \hd*1/4) {\tiny$\cdots$};
			\node (DS22) at (\dsp-0.6,\hd*1/4) {};
			\node (DS21) at (\dsp-1.2,\hd*1/4) {};
			\node (DS1)  at (\dsp,    \hd*0/4) {};
			\node (DSN)  at (\dsp,    \hd+1) {$\mathfrak{n}_{3,2}$};
			
			\filldraw [black] (DS6)  circle (2pt);
			\filldraw [black] (DS55) circle (2pt);
			\filldraw [black] (DS54) circle (2pt);
			\filldraw [black] (DS52) circle (2pt);
			\filldraw [black] (DS51) circle (2pt);
			\filldraw [black] (DS4)  circle (2pt);
			\filldraw [black] (DS25) circle (2pt);
			\filldraw [black] (DS24) circle (2pt);
			\filldraw [black] (DS22) circle (2pt);
			\filldraw [black] (DS21) circle (2pt);
			\filldraw [black] (DS1)  circle (2pt);
			
			\draw (DS1) -- (DS21) -- (DS4) -- (DS51) -- (DS6);
			\draw (DS1) -- (DS22) -- (DS4);
			\draw (DS1) -- (DS24) -- (DS4);
			\draw (DS1) -- (DS25) -- (DS4);
			\draw (DS4) -- (DS52) -- (DS6);
			\draw (DS4) -- (DS54) -- (DS6);
			\draw (DS4) -- (DS55) -- (DS6);
		
			\node (DR4)  at (\drp,    \hd) {};
			\node (DR3)  at (\drp,    \hd*2/3) {};
			\node (DR2)  at (\drp,    \hd*1/3) {};
			\node (DR1)  at (\drp,    0) {};
			\node (DRN)  at (\drp,    \hd+1) {$\mathfrak{s} \otimes \frac{\mathbb{K}[t]}{\langle t^3 \rangle}$};
			
			\filldraw [black] (DR4) circle (2pt);
			\filldraw [black] (DR3) circle (2pt);
			\filldraw [black] (DR2) circle (2pt);
			\filldraw [black] (DR1) circle (2pt);
			
			\draw (DR1) -- (DR2) -- (DR3) -- (DR4);
					
			\node (Z1) at (          \dbp,         0) {};
			\node (I1) at (\dbp-\anchCubo, \altoCubo) {};
			\node (D1) at (\dbp+\anchCubo, \altoCubo) {};
			\node (L1) at (          \dbp, \altoCubo*2) {};
			
			\node (Z2) at (          \dbp,\sepYCubo) {};
			\node (I2) at (\dbp-\anchCubo,\sepYCubo+\altoCubo) {};
			\node (D2) at (\dbp+\anchCubo,\sepYCubo+\altoCubo) {};
			\node (L2) at (          \dbp,\sepYCubo+\altoCubo*2) {};
			\node (DBN)  at (\dbp,    \hd+1) {$L_1$ and $L_2$};

			\filldraw [black] (Z1) circle (2pt);
			\filldraw [black] (Z2) circle (2pt);
			\filldraw [black] (I1) circle (2pt);
			\filldraw [black] (I2) circle (2pt);
			\filldraw [black] (D1) circle (2pt);
			\filldraw [black] (D2) circle (2pt);
			\filldraw [black] (L1) circle (2pt);
			\filldraw [black] (L2) circle (2pt);
			
			\draw (Z1) -- (I1) -- (L1);
			\draw (Z1) -- (D1) -- (L1);
			\draw [white,line width=0.7mm] (Z2)--(I2);
			\draw [white,line width=0.7mm] (Z2)--(D2);
			\draw (Z2) -- (I2) -- (L2);
			\draw (Z2) -- (D2) -- (L2);
			\draw (Z1) -- (Z2);
			\draw (D1) -- (D2);
			\draw (I1) -- (I2);
			\draw (L1) -- (L2);
		\end{tikzpicture}
		
		\caption{$L_1 = \mathfrak{s}\oplus  \mathfrak{s}\oplus \mathfrak{s}$ and $L_2 =  \mathfrak{s}\oplus  \mathfrak{s}\oplus  \mathbb{K}$, with $\mathfrak{s}$ simple.}
	
	\end{figure}
\begin{remark}
Self-duality of lattices of ideals is a necessary condition for being quadratic, but it is not equivalent to it. However, it can be a very useful tool to rule out the possibility of being quadratic. The lattices of ideals of the non-quadratic series $\fh_n$ and $\fl_n$ are not self-dual.
\end{remark}

\begin{remark}
Let define the split extension algebra of the 3-dimensional Heisenbeg $\mathfrak{a}=\fh_3\oplus \spa\langle d\rangle$, $\{x,y,z\}$ standard basis, so $[xy]=z$,  and $[d,x]=x$, $[d,y]=y$ and $[d,z]=2z$. This algebra is solvable and centreless. So, from item f) in Proposition \ref{prop:pattern-1}, it is not quadratic. But, $\mathfrak{a}^2=\fh_3$ is the unique maximal ideal and $\mathbb{K}z$ is the unique minimal. The rest of the ideals are of the form $\spa\langle z, \lambda x+\mu y\rangle$. So, the lattice of ideals of $\fa$ is self-dual.
\end{remark}
	

\section{Local quadratic algebras}\label{sec:local}

A \emph{local algebra} is an algebra with only one maximal ideal. Along this section, $\fs$, $\fr$ and $\fn$ will denote the Levi subalgebra of $\fg$, its solvable radical and the nilradical. We have the Levi decomposition $\fg=\fs\oplus \fr$. In case $\fs\neq 0\neq \fr$, we say thet $\fg$ mixed. The Jacobson radical $\mathfrak{J}(\fg)$ of $\fg$ is the solvable radical of $\fg^2$. This ideal is nilpotent and,
\begin{equation*}
    \mathfrak{J}(\fg)=[\fg,\fg]\cap \fr=[\fg,\fr]\subseteq \fn.
 \end{equation*}
When $\fg$ is solvable, $\mathfrak{J}(\fg)=\fg^2$. It is known that $\mathfrak{J}(\fg)$ is the intersection of all maximal ideals. So, in any local algebra, this radical is the only maximal ideal.

\begin{lemma}\label{lem:condicion-local} For a Lie algebra $\fg$ of dimension greater than one and not simple, the following is equivalent:
    \begin{enumerate}[\quad a)]
        \item $\fg$ is local,
        \item $\mathfrak{J}(\fg)=\fn$,
        \item $\fg=\fs \oplus \fn$ with $\fs$ one dimensional or simple and $$\fn^2=\{x\in \fn: [y,\fn]\subseteq \fn^2\ \forall y\in \fs\}.$$
    \end{enumerate}
 
\end{lemma}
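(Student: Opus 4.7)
The plan is to close the cycle $(a) \Rightarrow (b) \Rightarrow (c) \Rightarrow (a)$. I read the condition in (c) as $\fn^2 = \{x \in \fn : [y,x] \in \fn^2\;\forall y \in \fs\}$, i.e.\ $(\fn/\fn^2)^{\fs} = 0$. Three earlier facts are the main tools: $\mathfrak{J}(\fg)$ is the intersection of all maximal ideals and equals $[\fg,\fr] \subseteq \fn$; the identity $[\fg,\fr]\subseteq \fn$ forces $\fr/\fn$ to be central in $\fg/\fn$, so $\fg/\fn \cong \fs \oplus \fr/\fn$ is reductive; and Weyl's complete reducibility theorem applies to finite-dimensional $\fs$-modules when $\fs$ is simple (replaced by direct eigenvalue analysis when $\fs$ is one-dimensional).

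For $(a) \Rightarrow (b)$, the unique maximal ideal $M$ of $\fg$ coincides with $\mathfrak{J}(\fg)$. Because $\dim \fg > 1$ and $\fg$ is local, $\fg$ cannot be nilpotent: a nilpotent algebra of dimension greater than one has $\dim \fg/\fg^2 \ge 2$ and so possesses distinct codimension-one subspaces lifting to distinct maximal ideals. Thus $\fn \subsetneq \fg$ is a proper ideal, and $\fn \subseteq M = \mathfrak{J}(\fg) \subseteq \fn$ forces equality. For $(c) \Rightarrow (a)$, the decomposition $\fg = \fs \oplus \fn$ gives $\fr = \fn$ when $\fs$ is simple and $\fr = \fg$ when $\fs = \ku d$, and in either case $\mathfrak{J}(\fg) = [\fg,\fr] = [\fs,\fn] + \fn^2$. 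The $\fn^2$ condition is $(\fn/\fn^2)^{\fs} = 0$, and complete reducibility (or, for $\fs = \ku d$, the fact that $\ad d$ has no zero eigenvalue on $\fn/\fn^2$) upgrades this to $[\fs,\fn] + \fn^2 = \fn$, whence $\mathfrak{J}(\fg) = \fn$. Since $\fg/\fn \cong \fs$ has no proper ideals and every maximal ideal of $\fg$ contains $\mathfrak{J}(\fg) = \fn$, $\fn$ is the unique maximal ideal of $\fg$.

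The main obstacle is $(b) \Rightarrow (c)$: one must extract the rigid decomposition $\fg = \fs \oplus \fn$ with $\fs$ simple or one-dimensional from $\mathfrak{J}(\fg) = \fn$, and an example like $\mathfrak{s}_1 \oplus \mathfrak{s}_2 \oplus V$ with both simple copies acting faithfully on a nilpotent module $V$ shows that (b) in isolation is weaker than (c). The argument must therefore invoke the localness of $\fg$ itself (available because the implication sits inside the equivalence): this forces the reductive quotient $\fg/\fn$ to have a unique maximal ideal, hence to be either simple non-abelian (whence $\fr/\fn = 0$, $\fr = \fn$, and any Levi complement of $\fn$ provides a simple $\fs$) or one-dimensional abelian (whence $\fg$ is solvable and any $d \in \fg \setminus \fn$ provides $\fs = \ku d$). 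The $\fn^2$ condition finally follows by running the computation $\mathfrak{J}(\fg) = [\fs,\fn] + \fn^2$ of the previous step in reverse: $\mathfrak{J}(\fg) = \fn$ gives $[\fs, \fn/\fn^2] = \fn/\fn^2$, and complete reducibility converts this surjectivity into the vanishing of $\fs$-invariants.
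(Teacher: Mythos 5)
The paper states this lemma without any proof, so there is nothing to compare your argument against; judging it on its own terms, your steps $a)\Rightarrow b)$ and $c)\Rightarrow a)$ are correct and complete (the observation that a nilpotent algebra of dimension greater than one has at least two maximal ideals, and the computation $\mathfrak{J}(\fg)=[\fs,\fn]+\fn^2=\fn$ via Weyl's theorem or invertibility of $\ad d$ on $\fn/\fn^2$, are exactly what is needed). The problem is your treatment of $b)\Rightarrow c)$. Inside a cyclic proof $a)\Rightarrow b)\Rightarrow c)\Rightarrow a)$, the only hypothesis available when proving $b)\Rightarrow c)$ is $b)$; you cannot ``invoke the localness of $\fg$ itself'' on the grounds that it ``sits inside the equivalence'' --- that is precisely the circularity the cycle structure is meant to avoid. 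If you want to use $a)$ there, you must restructure the proof as $a)\Rightarrow b)$, $a)\Rightarrow c)$, $c)\Rightarrow a)$, and then you have established $a)\Leftrightarrow c)$ and $a)\Rightarrow b)$ but never $b)\Rightarrow$ anything, so $b)$ drops out of the equivalence.

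The deeper issue, which you yourself exhibit and then step around, is that $b)\Rightarrow c)$ is simply false as stated: take $\fg=\fs_1\oplus\fs_2$ with $\fs_i$ simple (or $\fs_1\oplus\fs_2\oplus V$ with $V=V_1\otimes V_2$ a tensor product of nontrivial irreducibles made into an abelian ideal). Then $\fn=\fr=V$ and $\mathfrak{J}(\fg)=[\fg,\fr]=V=\fn$, so $b)$ holds, yet $\fg$ has two maximal ideals $\fs_1\oplus V$ and $\fs_2\oplus V$ and its Levi factor is not simple, so both $a)$ and $c)$ fail. A counterexample cannot be repaired by a better argument. The honest conclusion is that the lemma needs a correction --- for instance, condition $b)$ should read ``$\mathfrak{J}(\fg)=\fn$ and $\fg/\fn$ is simple or one-dimensional'' (equivalently, $\fn$ is itself a maximal ideal), after which your computation of $\mathfrak{J}(\fg)$ does yield $c)$ --- and your write-up should flag this explicitly rather than smuggle condition $a)$ into the hypothesis of $b)\Rightarrow c)$.
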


\begin{remark}
Assertion $\fn^2=\{x\in \fn: [y,\fn]\subseteq \fn^2\ \forall y\in \fs\}$ in Lemma \ref{lem:condicion-local}, is equivalent to saying that $\ad x$ is faithful on $\fn/\fn^2$ for all $x\in \fn\notin \fn^2$ if $\fs$ is one-dimensional and, in the simple case, the $\ad \fs$-module $\fn/\fn^2$ has non-trivial modules.
\end{remark}

We start revisiting Theorem 3.1 in \cite{bajo2007lie}.  

\begin{proposition}[Bajo, Benayadi, 2007]\label{prop:benayadi-bajo}
   Up to isometric isomorphisms, any local and quadratic Lie algebra $(\fg, \varphi)$ is of one of the following types:
    \begin{enumerate}[\quad a)]
        \item $\fg=\spa\langle x \rangle$ and  $\varphi(x,x)=1$.
        
        \item $\fg$ is simple and there is $\lambda\neq 0$ such that $\lambda\varphi(x,y)=\tr(\ad\, x\circ \ad\, y )$ is the Killing form.
        
        \item $\fg=\fs\oplus\fs^*$ the central split extension of a simple Lie algebra $\fs$ and its dual module $\fs^*$ and $\varphi(x+\alpha, y+\beta)=\alpha(y)+\beta(x)$.
        
        \item $\fg$ is a solvable Lie algebra double extension of a nonzero and nilpotent quadratic Lie algebra $(\fa, \psi)$ by a $\psi$-skew derivation $\delta$ which is invertible on the centre $Z(\fa)$.
        
         \item $\fg=\fg^2$ is a mixed Lie algebra double extension of a nonzero and nilpotent quadratic Lie algebra $(\fa, \psi)$ by a simple subalgebra $\fs$ through a representation $\psi\colon \fs\to \der_\varphi(\fa)$ such that $Z(\fa)$ has non trivial $\fs$-modules.
        
        
        
        
    
    \end{enumerate}
    The Jacobson radical $\mathfrak{J}(\fg)=\fn$ is non-abelian for any $\fg$ in items d) or e),  $\mathfrak{J}(\fg)=\fs^*$ in item c) and null for simple and one-dimensional algebras.

\end{proposition}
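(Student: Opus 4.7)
The plan is to reduce the classification to Proposition \ref{prop:pattern-1}(g), which already supplies that an indecomposable quadratic Lie algebra is one-dimensional, simple, or isometrically isomorphic to a double extension of a quadratic pair $(\fa, \psi)$ by an algebra $\fh$ that is one-dimensional or simple. First I would check that locality forces indecomposability: if $\fg = I \oplus I^\perp$ as nondegenerate ideals, a maximal ideal of $I$ joined with $I^\perp$, and a maximal ideal of $I^\perp$ joined with $I$, would give two distinct maximal ideals. This immediately handles case (a), and for case (b) one verifies, via Schur's lemma applied to the two $\fg$-module isomorphisms $\fg \to \fg^*$ coming from $\varphi$ and from the Killing form, that in the simple case $\varphi$ must be a nonzero scalar multiple of the Killing form.

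Assume now $\fg$ is neither simple nor one-dimensional, so $\fg = \fh \oplus (\fa \oplus_\omega \fh^*)$. I would split into four subcases according to whether $\fh$ is one-dimensional or simple and whether $\fa$ vanishes. If $\fh$ is one-dimensional and $\fa = 0$, then $\fg$ is two-dimensional abelian, which has several one-dimensional maximal ideals and is therefore not local. If $\fh$ is simple and $\fa = 0$, then $\fg = \fs \oplus \fs^*$ is the central split extension of case (c); here $\fs^*$ is an irreducible $\fs$-module (isomorphic to the adjoint via the Killing form), hence the unique maximal ideal, so $\mathfrak{J}(\fg) = \fs^*$ and locality is immediate.

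The substantive work lies in the two subcases with $\fa \neq 0$. Since $\fh^*$ is abelian and central in $\fa \oplus_\omega \fh^*$, this ideal is nilpotent if and only if $\fa$ is, and Lemma \ref{lem:condicion-local} forces $\fn = \fa \oplus \fh^*$ and thereby the nilpotency of $\fa$. The local condition $\fn^2 = \{x \in \fn : [y,x] \in \fn^2 \; \forall y \in \fh\}$ must then be translated into a condition on $(\fa, \psi, \rho)$. The key is the $\fh$-module isomorphism $\fa/\fa^2 \cong Z(\fa)^*$ induced by $\psi$, which uses the identity $Z(\fa) = (\fa^2)^\perp$ in any quadratic nilpotent $\fa$ together with the $\psi$-skewness of $\rho$. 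Since the coadjoint $\fh$-module $\fh^*$ has no trivial submodules when $\fh$ is simple, the condition reduces to one on $\fa/\fa^2$, which dualizes to: in the one-dimensional case, the generator of $\fh$ acts as a $\psi$-skew derivation $\delta$ invertible on $Z(\fa)$ (case (d), with $\fg$ solvable); in the simple case, $\rho$ makes $Z(\fa)$ carry a non-trivial $\fs$-module structure (case (e), where one additionally verifies $\fg = \fg^2$ from simplicity of $\fs$ together with the cocycle contribution to $\fg^2 \cap \fh^*$).

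The main obstacle I foresee is precisely this last duality step: correctly describing the $\fh$-module structure on $\fn/\fn^2$, separating the $\fa/\fa^2$ and $\fh^*$ contributions, and then transferring the kernel-free condition from $\fa/\fa^2$ to $Z(\fa)$ via the skew-symmetric pairing. The remainder, including the identification of $\mathfrak{J}(\fg)$ in each case, is bookkeeping once the double extension decomposition is in place.
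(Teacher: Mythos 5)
Your proposal is correct in substance, but it follows a genuinely different route from the paper. The paper's proof is essentially a citation: it splits into the reductive case (giving a) and b)) and the non-reductive case, and then invokes Theorem 3.1 of Bajo--Benayadi \cite{bajo2007lie} (items i) and ii)) to obtain d), c) and e), adding only the short argument that a solvable local $\fg$ cannot have $\fn^2=0$ with $\fn=\fn^\perp$. You instead derive the classification internally from Proposition \ref{prop:pattern-1}(g) (the Medina--Revoy decomposition of an indecomposable quadratic algebra as a double extension by a one-dimensional or simple $\fh$) combined with Lemma \ref{lem:condicion-local}, after first checking that locality forces indecomposability. This buys a self-contained argument that makes visible exactly how the local condition constrains the double-extension data $(\fa,\psi,\rho)$, at the cost of having to execute the duality step you flag: translating $\fn^2=\{x\in\fn:[y,x]\in\fn^2\ \forall y\in\fh\}$ into a condition on $Z(\fa)$ via $Z(\fa)=(\fa^2)^\perp$ and the $\psi$-skewness of $\rho$. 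That step does go through, but one point deserves more care than your sketch suggests: $\fn^2$ is only the span of elements $[a,a']_\fa+\omega(a,a')$, not a priori the internal direct sum $\fa^2\oplus\spa\omega(\fa,\fa)$, so the ``separation'' of the $\fa/\fa^2$ and $\fh^*$ contributions should be phrased via the exact sequence $\fs^*/(\fs^*\cap\fn^2)\to\fn/\fn^2\to\fa/\fa^2$ (using that $\fn^2$ projects onto $\fa^2$ and that $\fs^*\cap\fn^2$ is an $\fs$-submodule of the irreducible $\fs^*$); in the one-dimensional case one must also observe that $\delta^*\in\fn^2$ is forced (and, conversely, follows from invertibility of $\delta$ on $Z(\fa)$ by pairing against $Z(\fa)$). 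With those details supplied, your argument yields the same list a)--e) and the statements about $\mathfrak{J}(\fg)$.
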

\begin{proof}If $\fg=\fs\oplus\fn$ is reductive, the only possibilities are a) or b). Assume then $\fg$ non-reductive. From \cite[Theorem 3.1, item i)]{bajo2007lie} we arrive at item d) and from \cite[Theorem 3.1,item ii)]{bajo2007lie} $\fn^\perp=Z(\fn)$ we get either b) if $\fn=\fn^\perp$ or e) otherwise. Note that if $\fg$ solvable, $0\neq Z(\fg)\subseteq \fn$. Then $\fg^2=\fn=[x,\fn]\oplus \fn^2$ and $\fn^2=0$ give us $\ad\, x|_\fn$ bijective, which is not possible.
\end{proof}

\noindent From previous proposition we have the following conclusions:
    \begin{itemize}
    \item Simple and one-dimensional algebras are the only local quadratic and reductive Lie algebras (types a) and b)).
    \item Trivial $T^*$-extensions of simple Lie algebras are local and quadratic (type c)). Recall that the $T^*$-extension is a more general construction. The trivial extension is the easier case.
   \end{itemize} 
Quadratic local algebras $(\fg, \varphi)$ of type d) or e) have some common structural patterns and remarkable differences. Using Proposition \ref{prop:benayadi-bajo} and \cite[Theorem 2.2]{bordemann1997nondegenerate}, we can easily deduce the following:
    \begin{itemize}
    \item Common patterns: $\fr=\fn$ is the unique maximal ideal, $\fn^\perp$ is the unique minimal ideal and $\fn\neq \fn^\perp$. In addition, $0\neq \fn^\perp\subseteq \fn^2$ and $\fg$ can be built as a double extension of the quadratic nilpotent algebra,
    
        \begin{equation*}
        \left(\frac{\fn}{\fn^\perp},\varphi|_{\frac{\fn}{\fn^\perp}}\right).
        \end{equation*}
    
    \item Extra patterns for $\fg$ of type d): $\fg$ is solvable ($\fs=0$) and $\fn=\fg^2$ is of codimension $1$, $\fn^\perp=Z(\fg)=\spa\langle z\rangle\subseteq \fn^2\subset \fn$ and the double extension is induced through the $\varphi$-skew $\ad\, x$ for any $x\in \fg$ such that $\varphi(x,z)\neq 0$ and $\ad\, x|_{Z(\frac{\fn}{\fn^\perp})}$ is invertible.
    
     \item Extra patterns for $\fg$ of type e): $\fg=\fs\oplus\fn$ is a perfect mixed algebra and $\fs$ is simple, $ \fn^\perp=Z(\fn)\subseteq\fn^2\subset\fn=[\fg,\fn]$, $Z(\fn)\cong \fs^*$ as $\ad\, \fs$-modules and the second term of the lower central series $Z_2(\fn)$ has no trivial $\ad\, \fs$-modules. Here the double extension is induced by the adjoint representation of the Levi subalgebra, $\ad_{\frac{\fn}{\fn^\perp}}\colon \fs\to \der_\varphi(\frac{\fn}{\fn^\perp})$.
     \end{itemize}
        
\noindent In both cases, by imposing $\fn^2=\fn^\perp$, the algebra $\fg$ is a double extension of the abelian algebra $\frac{\fn}{\fn^\perp}$ and yields to the constructions given in Examples \ref{ex:oscillator} ($\fg$ solvable) and \ref{ex:ext-simple-by-abelian} ($\fg$ perfect). Otherwise, $\fn^2\neq\fn^\perp$ and local quadratic Lie algebras follow from nilpotent and nonabelian quadratic algebras. In our Examples \ref{ex:ext-by-nonabelian-1} and \ref{ex:ext-by-nonabelian-2} we will present some algebras of low nilindex.

\begin{example}
(Generalized oscillator algebras) Abelian Lie algebras are nothing else but a pair $(\fv_n, \varphi)$ where $\fv_n$ is a $n$-dimensional vector space endowed with a symmetric and non-degenerate symmetric form $\varphi$. Consider now any $\varphi$-skew linear automorphism $\delta$ of $\fv_n$. This is only possible in the case $n=2m$. Then, the double extension $\fd(2m)=\delta\oplus \fv_{2m}\oplus \delta^*$ is local and quadratic. Note that $\fh=\fd(2m)^2=\fv_{2m}\oplus \delta^*$ and its square is just $[\fd(2m)^2,\fd(2m)^2]=\delta^*$. So $\fh=\fh_m$ is a generalized Heisenberg algebra and $\ad_{\fd(2m)}\delta\in \inner \fd(2m)$ extends $\delta$ by declaring $[\delta,\delta^*]=0$. Note that $\ad_{\fh}\delta\in \der \fh$ is an outer derivation of $\fh$. \hfill $\square$
 \end{example}

\begin{remark}
Over the reals and starting with a positive definite form $\varphi$, since $\delta$ is $\varphi$-skew, there is a $\varphi$-orthonormal basis $\{e_1,\dots e_{2m}\}$ of the real space $\fv_{2m}$ such that $[e_{2i-1}, e_{2i}]_{\fd(2m)}=\lambda_ie_{2m+1}$, $e_{2m+1}=\delta^*$  for some nonzero real numbers $0<\lambda_1\leq \lambda_2 \dots \leq \lambda_m$ and $\delta(e_{2i-1})=\lambda_ie_{2i}$ and $\delta(e_{2i})=-\lambda_ie_{2i-1}$ and $\delta(e_{2m+1})=0$. According to \cite[Theorem 4.1, section 4]{medina1985groupes}, the algebras $\fg(\lambda)=\spa\langle e_0(=\delta),  e_1,\dots e_{2m}, e_{2m+1}(=\delta^*)\rangle$, $\lambda=(\lambda_1, \dots, \lambda_m)$ are named oscillator algebras. This family determines the set of non simple Lie groups, connected and simply connected, endowed with a Lorentz-invariant metric that makes them indecomposable. The series of oscillator algebras also appeared in the characterizacion of Lorentzian cones given by by Hilgert and Hofmann   \cite{hilgert1985lorentzian}.
\end{remark}

\begin{example}\label{ex:ext-simple-by-abelian}
Let $\fs$ be a simple Lie algebra and $(\fv, \varphi)$ an $\fs$-module without trivial submodules. Denote by $\rho$ the representation of $\fs$ on $\fv$. Assume that $\varphi$ is a symmetric, nondegenerate and $\fs$-invariant bilinear form. The existence of $\varphi$ only is posible if the $2$-symmetric tensor power, $S^2\fv$, has a trivial submodule. Then, the representation $\rho\colon\fs\to \fgl(\fv)$ satisfies
\begin{equation*}
    \varphi(\rho(s)(u),v))+\varphi(u,\rho(s)(v))=0\ \forall s\in \fs.
\end{equation*}
Equivalently, $\rho\colon \fs\to \der_\varphi(\fv, \varphi)$. 
Let $\fv(\fs)=\fs\oplus_\psi(\fv \oplus_\omega \fs^*)$ the double extension of $(\fv, \varphi)$ by $\rho$. The bracket and the bilinear form are given by equations \eqref{eq:de-bracket} and \eqref{eq:de-bilinear} where $\fg=\fs$, $\fa=\fv$ is an abelian algebra and $\omega(u,v)(s)=\varphi(\rho(s)(u), v)$, $u,v\in \fv$ and $s\in \fs$. Taking $\fs=\fsl_2(\ku)$ and $\fv=V(n)$, the unique possibility is $n=2m$ and the algebra $\fa(\fsl_2(\ku))$ is a ($2m+7$)-dimensional local algebra with $4$ ideals in chain. \hfill $\square$
\end{example}

In \cite{del2012free}, the authors compute $\der_\varphi \fn_{2,3}$ and $\der_\phi \fn_{3,2}$. Skew-derivations of quadratic are a natural source to get examples of local quadratic algebras by double extension. 

\begin{example}\label{ex:ext-by-nonabelian-1} Let $(\fn_{2,3}, \varphi)$ the algebra described in Example \ref{ex:ndt-free} with (Hall) basis $\{a_i\}_{i=1}^5$. Then, $\der_\varphi \fn_{2,3}$ is $6$-dimensional and it is given as the direct sum of the (nipotent) ideal $\inner \fn_{2,3}$, and its Levi subalgebra $\fs\cong \fsl_2(\ku)$. Thus, any derivation decomposes as the sum (for short, $D=D(m_i) \in \fs$ and $d=d(v_i)\in \inner  \fn_{2,3}$):
\begin{equation*}
D+d=
\begin{array}{|>{\centering\arraybackslash$} p{0.75cm} <{$}>{\centering\arraybackslash$} p{0.75cm} <{$}|>{\centering\arraybackslash$} p{0.75cm} <{$}|>{\centering\arraybackslash$} p{0.75cm} <{$}>{\centering\arraybackslash$} p{0.75cm} <{$}|}
\hline
 m_1 & m_2 & 0 & 0 & 0 \\
 m_3 & -m_1 & 0 & 0 & 0 \\
\hline
 0 & 0 & 0 & 0 & 0 \\
\hline
 0 & 0 & 0 & m_1 & m_2 \\
 0 & 0 & 0 & m_3 & -m_1 \\
\hline
\end{array}
\enspace+\enspace
\begin{array}{|>{\centering\arraybackslash$} p{0.75cm} <{$}>{\centering\arraybackslash$} p{0.75cm} <{$}|>{\centering\arraybackslash$} p{0.75cm} <{$}|>{\centering\arraybackslash$} p{0.75cm} <{$}>{\centering\arraybackslash$} p{0.75cm} <{$}|}
\hline
 0 & 0 & 0 & 0 & 0 \\
 0 & 0 & 0 & 0 & 0 \\
\hline
 v_2 & v_1 & 0 & 0 & 0 \\
\hline
 v_3 & 0 & v_1 & 0 & 0 \\
 0 & v_3 & -v_2 & 0 & 0 \\
\hline
\end{array}
\end{equation*}

\noindent Note that $\fn_{2,3}$ has two copies of the $2$-dimensional natural $\fs$-module $V(1)$ ($\langle a_1,a_2\rangle$ and $\langle a_4,a_5\rangle$) and one of the trivial module $V(0)=\langle a_3\rangle$. By double extension formulae \eqref{eq:de-bracket} and \eqref{eq:de-bilinear} we get the $11$-dimensional local (apply Lemma \ref{lem:condicion-local} or Proposition \ref{prop:benayadi-bajo}) perfect quadratic Lie algebra $\fn_{2,3}(\fs)=\fs \oplus \fn_{2,3}\oplus \fs^*$. The lattice of ideals of $\fn_{2,3}(\fs)$ is a $6$-chain. Taking any derivation $\delta=D(m_1,m_2,m_3)$ such that $m_3m_2-m_1^2\neq 0$ we get a 7-dimensional solvable local quadratic via the double extension by $\langle \delta \rangle$. \hfill $\square$
\end{example}

\begin{example}\label{ex:ext-by-nonabelian-2}
Consider now the quadratic $(\fn_{3,2}, \phi)$ in Example \ref{ex:ndt-free} with (Hall) basis $\{a_i\}_{i=1}^6$. In this case, $\der_\varphi \fn_{3,2}$ is $10$-dimensional and it is the direct sum of $\inner \fn_{3,2}$ and a simple Lie subalgebra $\fs\cong \fsl_3(\ku)$ ($D=D(m_1,\dots, m_5) \in \fs$ and $d=d(v_1,v_2,v_3)\in \inner \fn_{3,2}$):
\begin{equation*}
D+d=
\begin{array}{|>{\centering\arraybackslash$} p{0.7cm} <{$}>{\centering\arraybackslash$} p{0.7cm} <{$}>{\centering\arraybackslash$} p{1.7cm} <{$}|>{\centering\arraybackslash$} p{1.7cm} <{$}>{\centering\arraybackslash$} p{0.8cm} <{$}>{\centering\arraybackslash$} p{0.8cm} <{$}|}
\hline
 m_1 & m_2 & m_3 & 0 & 0 & 0 \\
 m_4 & m_5 & m_6 & 0 & 0 & 0 \\
 m_7 & m_8 & -m_1-m_5 & 0 & 0 & 0 \\
\hline
 0 & 0 & 0 & m_1+m_5 & m_6 & -m_3 \\
 0 & 0 & 0 & m_8 & -m_5 & m_2 \\
 0 & 0 & 0 & -m_7 & m_4 & -m_1 \\
\hline
\end{array}
\ +\ 
\begin{array}{|>{\centering\arraybackslash$} p{0.66cm} <{$}>{\centering\arraybackslash$} p{0.66cm} <{$}>{\centering\arraybackslash$} p{0.66cm} <{$}|>{\centering\arraybackslash$} p{0.66cm} <{$}|}\hline
 0 & 0 & 0 & \\
 0 & 0 & 0 & \cdots\\
 0 & 0 & 0 & \\\hline
 v_1 & v_2 & 0 &\\
 v_3 & 0 & v_2 &\cdots\\
 0 & v_3 & -v_1 &\\\hline
\end{array}
\end{equation*}
Here $\fn_{3,2}$ decomposes as two copies of the $3$-dimensional natural $\fs$-module. By double extension, we get the $22$-dimensional local (just applying Lemma \ref{lem:condicion-local}) perfect quadratic Lie algebra $\fn_{3,2}(\fs)=\fs \oplus \fn_{3,2}\oplus \fs^*$ and its lattice of ideals is a $5$-chain.  \hfill $\square$
\end{example}

\begin{example}\label{ex:ext-by-oscillator}
According to \cite[Proposition 5.1]{figueroa1996structure}, double extensions via inner derivations provide decomposable quadratic algebras. So, any double extension of $(\fd_4, \psi)$ in Example \ref{ex:oscillator} is decomposable because of $\der_\psi \fd_4=\inner  \fd_4$. \hfill $\square$
\end{example}

\section{Final comments}

In 2014, A. Elduque and S. Benayadi classified real and complex (see~\cite[Theorems 3.16 and 4.11]{benayadi2014classification}) indecomposable mixed quadratic Lie algebras of dimension $\leq 13$. The Levi subalgebra of this type of algebras in the complex case is the $3$-dimensional split $\fsl_2(\mathbb{C})$ and over the real either $\fsl_2(\mathbb{R})$ or $\fsu_2(\mathbb{R})$. Combining irreducible modules for these algebras, smart reasoning and elementary linear tools and previous knwoledge on basic structure of quadratic Lie algebras, the authors achieve a very clean classification. In the complex case, apart from the double extension process, constructions of quadratic Lie algebras as tensor products (Example \ref{ex:tensor_product}) also appeared. The authors arrived there by using Jordan algebras of dimension $2,3,4$ and following the ideas given in 1976 by B.N. Allison \cite[Theorem 1, Section 5]{allison1976construction}. Lie algebras in the complex classification are local, so indecomposable. Most part of this classification could be recovered from previous examples seen in Section \ref{sec:local}. We also point out that their lattices of ideals are mainly $n$-chains with $n$ up to six elements.

\section*{Funding}
The authors have been partially funded by grant MTM2017-83506-C2-1-P of `Ministerio de Econom\'ia, Industria y Competitividad, Gobierno de Espa\~na' (Spain). J. Rold\'an-L\'opez has been also supported by a predoctoral research grant FPI-2018 of `Universidad de La Rioja'.


\end{document}